\documentclass{amsart}

\usepackage{ifthen}
\usepackage{graphicx}
\usepackage{amssymb}
\usepackage{enumerate}
\usepackage{url}
\usepackage{paralist}
\usepackage{charter}
\usepackage{natbib}
\usepackage{stmaryrd}
\usepackage{mathabx}
\usepackage{bussproofs}
\usepackage{proof}

\newtheorem{anyprop}{Anyprop}[section]

\newtheorem{theorem}[anyprop]{Theorem}
\newtheorem{lemma}[anyprop]{Lemma}
\newtheorem{proposition}[anyprop]{Proposition}
\newtheorem{corollary}[anyprop]{Corollary}

\theoremstyle{definition}

\newtheorem{definition}[anyprop]{Definition}

\newtheorem{remark}[anyprop]{Remark}


\theoremstyle{remark}

\numberwithin{equation}{section}

\usepackage{amscd}
\usepackage{amssymb}
\input xy
\xyoption{all}

\setlength{\textwidth}{28pc} \setlength{\textheight}{43pc}

\begin{document}
\title[FUNCTIONAL CONCEPTUAL SUBSTRATUM IN MATHEMATICAL CREATION]
{FUNCTIONAL CONCEPTUAL SUBSTRATUM AS A NEW COGNITIVE MECHANISM FOR MATHEMATICAL CREATION}

\author[Danny Arlen de Jes\'us G\'omez-Ram\'irez]{Danny Arlen de Jes\'us G\'omez-Ram\'irez}
\author[Stefan Hetzl]{Stefan Hetzl}
\address{Vienna University of Technology, Institute of Discrete Mathematics and Geometry,
wiedner Hauptstrasse 8-10, 1040, Vienna, Austria.}
\email{daj.gomezramirez@gmail.com}
\email{stefan.hetzl@tuwien.ac.at}

\begin{abstract}
We describe a new cognitive ability, i.e., functional conceptual substratum, used implicitly in the generation of several mathematical proofs and definitions. Furthermore, we present an initial (first-order) formalization of this mechanism together with its relation to classic notions like primitive positive definability and Diophantiveness. Additionally, we analyze the semantic variability of functional conceptual substratum when small syntactic modifications are done. Finally, we describe mathematically natural inference rules for definitions inspired by functional conceptual substratum and show that they are sound and complete w.r.t.\ standard calculi.


\end{abstract}
\maketitle
\noindent Mathematical Subject Classification (2010): 03F99, 03B22

\smallskip

\noindent Keywords: genericity, proto-typicality, recursively enumerability, Diophantine set, human-style proof.
\section*{Introduction}
During the last decades outstanding interdisciplinary research has emerged involving the identification and subsequently formalization of the most basic cognitive mechanisms used by the mind during mathematical invention/crea- tion. Among these processes one can mention formal conceptual blending \cite{gomezothers}, \cite{fauconnierturner}; analogical reasoning \cite{gickholyoak}, \cite{hdtp}; and metaphorical thinking \cite{lakoffjohnson}, \cite{lakoff}, among others.

Now, a fundamental question related with the cognition in mathematical research involves the description of a global taxonomy of the cognitive mechanisms used (for instance) by working mathematicians for creating/inventing new mathematical results. 

So, in this paper we present an additional cognitive ability, called \emph{formal conceptual substratum}, used frequently and implicitly in the construction of mathematical arguments and definitions. We support our presentation by a significant amount of examples. Additionally, we show an initial (first-order) formalization of this mechanism and its relation with classic notions like primitive positive definability, recursive enumerability and Diophantiveness. In addition, we analyze how strongly the semantic range of this meta-notion varies (or not) when gradual changes are done to the language and to the formal structures in consideration. Finally, we present natural inference rules, insipired by functional conceptual substratum, and prove them sound and complete w.r.t.\ standard calculi.

\section{Taking inspiration from Examples}

Suppose that one should solve the following elementary question: 

 Why when we add two even (integer) numbers the result is again an even number? 

This seems to be true for small pairs of numbers $2+6=8$, $12+18=30$ and $214+674=888$. Now, for getting a general proof of this fact, we should consider syntactic expressions which can allow us to `represent' the even numbers in a compact way. Therefore, we typically come up with a (mental) representation of the form $2\cdot n$. This means that essentially we are able to represent the collection of even numbers simultaneously with the single expression $2\cdot n$, where we assume implicitly that $n$ is an integer. On the other hand, if we know that a number $c$ can be written as $2\cdot d$, where $d$ is an integer, then by definition $c$ should be an even number. In conclusion, we have found a compact (morphological-syntactic) expression for representing every even number in a unified way.

Now, let us consider again the former question with the former representation in mind: First, we need to consider two (potentially different) even numbers, so we consider (or `imagine') a first even number $2\cdot a$ and a second one $2\cdot b$, where $a$ and $b$ are integers. Second, we sum these numbers generically, namely, we obtain the expression $2\cdot a +2\cdot b$. In addition, we check if the final syntactic expression corresponds to an even number. Thus, we try to give it the desired form $2\cdot \#$, where $\#$ is a natural number. So, we factorize the former algebraic expression and get an expression of the form $2\cdot (a+b)$. Lastly, we realize that this number has the desired form $2\cdot x$, where $x=a+b$ is an integer. In conclusion, we 'proved' an affirmative answer for the former question by performing symbolic operations on morphological generic representations for even numbers.

More generally, when someone tries to solve a mathematical problem, (s)he considers, in a lot of cases, generic representations for the `standard' elements living in the corresponding mathematical structures and, subsequently (s)he performs `symbolic computations' with these representations for solving the problem and for obtaining further insights towards a final solution.

Let us consider a second example: Let $f(x)$ be a polynomial with integer coefficients. Then the polynomial $h(x)=f(x)f''(x)$ has even degree.

A very usual way for finding a proof of this statement is by taking a syntactic formal representation for $f(x)$. Effectively, from the hypothesis we see that $f(x)$ can be explicitly written as $a_mx^m+\cdots+a_0$, where $m \in \mathbb{N}$, $a_i\in \mathbb{Z}$ and $a_m\neq0$. So, we find a representation of $f''(x)$ as $m(m-1)a_mx^{m-2}+\cdots+2a_2$. 

In conclusion, we write $h(x)$ as 

\[m(m-1)a_m^2x^{2(m-1)}+\cdots+2a_0a_2.\]

So, from this representation we verify that $h(x)$ has even degree.

These morphological-syntactic representations are the seminal tools which allow us to perform general logical inferences with single syntactic elements and, simultaneously prevent us from repeating the same kind of arguments for several specific instances of $f(x)$ varying on their degrees or coefficients.

A third example comes from linear algebra. Let us assume that we have two bases $A=\{u_1,\ldots,u_n\}$ and $B=\{v_1,\ldots,v_m\}$ for a vector space $V$. So, if we want to prove (in a standard way) that the cardinality of these two bases is the same, i.e., $m=n$, then we need to use syntactic representations of the elements of $V$ such as $\sum_{i=1}^n\alpha_iu_i$ (or $\sum_{j=1}^m\beta_jv_j$). Effectively, one of the most simple arguments consists of replacing gradually the elements of one base with the elements of the other one in such a way that the resulting finite set builds again a basis. So, one begins by writing $u_1$ in terms of the elements of $B$, i.e., $u_1=\sum_{j=1}^m\gamma_jv_j$, and, subsequently, one chooses a coefficient $\gamma_{j_1}\neq 0$ in order to obtain a expression of the form 

\[v_{j_1}=\frac{1}{\gamma_{j_1}}u_1+\sum_{j=1,j\neq j_1}^m(\frac{\gamma_j}{\gamma_{j_1}})v_j.\]

Thus, one can replace $v_{j_1}$ by $u_1$ in $B$. Now, the next steps go essentially in the same (symbolical) way.

Fourth, the classic Euclidean proof (by contradiction) of the existence of infinitely many prime numbers uses in its core argument a kind of global syntactic description for a number $\prod_{i=1}^np_n+1$ bigger than one, which has no prime divisors.\footnote{Here, the assumption is that there exist finitely many prime numbers denoted by $p_1,\ldots,p_n$.}

Finally, the classic proof of the fact that the cardinality of the real numbers between zero and one (i.e. $[0,1]$) is uncountable uses as seminal argument the formal existence of a real number $\lambda=\sum_{i=1}^{\infty}b_i10^{-i}$, whose explicit decimal representation was chosen based on the corresponding decimal representations of the elements of (an hypothetical enumeration) of $[0,1]$, (i.e., $a_j=\sum_{r=1}^{\infty}a_{j,r}10^{-r}$) such that for all $i\in \mathbb{N}$, $9\neq b_i\neq a_{i,i}$.\footnote{The additional condition given by $9\neq b_i$ can be added for avoiding difficulties involving the ambiguity of the decimal representation.}

 In conclusion, this kind of `generic' syntactic representation is fundamental in several mathematical areas.

So, what lies behind the former examples is simply a specific and basic cognitive ability in which our minds choose \emph{conceptual substrata} of certain mathematical notions (e.g. even numbers and polynomials in one variable with coefficient in the integers) at a suitable level of generality, and in such a way that solving the problem simultaneously for several instances of the concepts involved can be translated into formal manipulations of fixed single conceptual representations chosen in advance.

In other words, the cognitive ability of conceptual substratum can be seen as a way of identifying and effectively using the essential (e.g. proto-typical) information of a concept in order to carry out successful deductions for solving several kinds of (mathematical) problems.


Let us consider several additional examples which allow us to enhance our initial intuitions about what the substratum of a (mathematical) concept is, and about how we can get more elements towards a first precise formalization of it. As a matter of notation we will write conceptual substrata between brackets ``$[-]$'', in order to clarify that we are talking about cognitive representations of the underlying concepts and not explicitly about the concepts themselves.

So, if $D$ denotes a mathematical concept (e.g., even numbers, polynomials, matrices, vector spaces), then we will denote by $CS(D)$ a conceptual substratum of $D$. It is important to clarify at this point that one single concept can have several conceptual substrata depending on the way in which we express such a concept syntactically. For instance, the concept of a (positive) prime number has the following two natural definitions:

\[\pi(p)=(\forall d \in \mathbb{N})(d|p \rightarrow (d=1 \vee d=p)),\]

or equivalently

\[\pi(p)=(\forall a,b \in \mathbb{N})(p|a\cdot b \rightarrow (p|a \vee p|b)).\]

From these notions one can obtain two conceptual substrata as follows:

\[CS(\text{Prime Numbers})=[d\in \mathbb{N},d|p \rightarrow (d=1 \vee d=p)],\]

and

\[CS(\text{Prime Numbers})=[a,b\in \mathbb{N},p|a\cdot b \rightarrow (p|a \vee p|b)].\]

Now, if one wishes to capture the essence of the notion of a prime number through an expression given by a term instead of the former expressions given by formulas, one can use a result of Ruiz \cite{ruiz} (among others) in order to find a quite explicit substratum for being a prime number:

\[CS(\text{Primes})=\left[1+\sum_{k=1}^{2(\left\lfloor n {\rm ln}n\right\rfloor+1)}\left(1-\left\lfloor \frac{\sum_{j=2}^k1+\left\lfloor \frac{-\sum_{s=1}^j\left(\left\lfloor \frac{j}{s}\right\rfloor-\left\lfloor \frac{j-1}{s}\right\rfloor-2\right)}{j}\right\rfloor}{n}\right\rfloor\right)\right].\] 

Most of the former conceptual substrata were expressions describing terms. Nonetheless, there are also a whole collection of concepts whose substrata are typically syntactic descriptions of relations, e.g. the number-theoretic concept of perfect number \cite{apostol}. Effectively, for this concept we can write 

\[CS(\text{Perf. Numbers})=\left[2\cdot n= \sum_{(d|n), (d>0)}d,n\in \mathbb{N}\right].\]

Another enlightening example is the concept of 'representation of the natural numbers in base $m$ ($m \in \mathbb{N}$)'.
Here we get 

\[CS(\text{m-ary Rep.})=\left[\sum_{i=0}^n\alpha_im^i:m\in\mathbb{N},\alpha_i\in\mathbb{N}, 0\leq\alpha_i<m\right].\]

We write the minimal amount of syntactic information that is required for recovering the essential features of this kind of representation.

Our approach has some informal similarities to the one based on (proto-)typicality presented in \cite{osherson}. In fact, finding the formal substratum of a concept can be seen as trying to present explicitly a morphological mathematical description of arbitrary instances of the corresponding concept, by starting with the typical ones. For instance, in our second example related with polynomials with coefficient into the integers, one can say that an expression of the form $\sum_{i=0}^2c_iX_i=c_0+c_1x+c_2x^2$, is a more typical instance of a polynomial than a constant $c_0$, or a monomial $x^m$, since the first one uses the whole spectrum of potential operations which constitute a polynomial (e.g., addition, multiplication and exponentiation), and the last ones use at most one of them. Effectively, the description of the quadratic polynomial resembles the formal substratum of the ring of polynomials better than constants or monomials.

\section{Towards a First Formalization}

As argued above, the ability to represent an arbitrary object having a certain property in a syntactic-morphological way plays a key role from a cognitive point of view. From a logical point of view this means that we are dealing with a definition {\em by a term}, or, in the case of an $r$-ary property, by a tuple of terms. Such a conceptual substratum will be called {\em functional conceptual substratum}. Let us fix a first-order logic language $L$ and an $L-$structure $M$. Now, taking inspiration from some of the former examples we state the following definition:

\begin{definition}
We say that a concept defined by a (r-ary) property $\Omega$ in $M$ (i.e. $\Omega \subseteq M^r$) has a \emph{functional conceptual substratum}, if there exist terms $t_i$ (for $i=1,\ldots,r$) and atomic formulas $A_1,\ldots,A_m$ whose variables are contained in  $\{x_1,\ldots,x_n\}$, such that for all $a_1,\ldots,a_r\in M$, $(a_1,\ldots,a_r)\in \Omega$ if and only if
\[
M\models (\exists x_1)\cdots(\exists x_n)(a_1=t_1\wedge\cdots\wedge a_r= t_r\wedge A_1\wedge \cdots \wedge A_m)
\]
where $t_1,\ldots,t_n$ are $L$-terms whose variables are among $x_1,\ldots,x_n$.
\end{definition}
So, it is straightforward to verify that the notions of even, odd and compose numbers; perfect squares and (more generally) nth-powers have functional conceptual substrata.

In addition one can prove that this notion coincides with primitive positive definability (see for example \cite{bodirsky}).

\subsection{Classic Arithmetic Structures}

Now, let us see how the fact that having this kind of `functional conceptual representations' materializes for several language-structure combinations.

First, it is worth noting that if we do not put any additional restriction on the atoms $A_j$ in the former definition, then for some $\Omega$ it could happen that these atoms contain even more important information about the concept $C$ than the terms $t_i$, for $i=1,\ldots,n$. Later, we will show explicitly this phenomenon with an example.

Let us consider the language $L=\left\{0,1,+,-,*,=,<\right\}$ and the structure $\mathbb{Z}$, the integers. Then, each $A_j$ has the form of either $u_1(x_1,\ldots,x_n)=u_2(x_1,\ldots,x_n)$ or $u_1(x_1,\ldots,x_n)<g_2(x_1,\ldots,u_n)$, where $u_1$ and $u_2$ are the corresponding polynomials in $\mathbb{Z}[x_1,\ldots,x_n]$ representing the terms appearing in $A_j$.

Now, in the first case $A_j$ can be rewritten as $h(x_1,\ldots,x_n)=0$, where $h=g_1-g_2$.
For the second case, we can use the well-known fact that any natural number can be written as the sum of four perfect squares \cite{hardywright} (i.e. Lagrange's theorem) in order to express the condition described by $A_j$ in a Diophantine way, i.e.,  

\[(\exists z_1 \cdots z_4)(g_1-g_2=z_1^2+z_2^2+z_3^2+z_4^2+1).\]

In addition, one can also express finite conjunctions of polynomial equations through a single equation by using the fact that over the integers it holds that $\sum_{i=1}^na_i^2=0$ if and only if each $a_i=0$. So, combining all the former steps one can construct an explicit polynomial $H(y_1,\ldots,y_r,x_1,\ldots,x_w)$ such that 
 
\[\mathbb{Z}\models (\forall y_1 \cdots y_r)(\Omega(y_1,\ldots,y_r) \leftrightarrow (\exists x_1 \cdots x_w)(H(y_1,\ldots,y_r,x_1,\ldots,x_w)=0)).\]

In other words, $\Omega$ defines a Diophantine set \cite[Ch. 1]{matiyasevich}.

Furthermore, by the MRDP theorem \citep[Ch. 2]{matiyasevich} $\Omega$ defines a recursively enumerable set. In fact, one can easily prove that a concept $C$ over the integers, described with the former language which has an functional conceptual substratum, must be recursively enumerable. Conversely, if $C$ is a concept defining a recursively enumerable property $\Theta$, then by the MRDP theorem $\Theta$ is Diophantine. Thus, for all $a_1\ldots,a_r\in\mathbb{Z}$,  $a_1,\ldots,a_r\in\Theta$ if and only if

\[\mathbb{Z}\models (\exists x_1 \cdots x_m)(F(a_1,\ldots,a_r,x_1,\ldots,x_m)=0)).\]

We can rewrite this formula as

\[\mathbb{Z}\models (\exists x_1 \cdots x_m)(\exists x'_1 \cdots x'_r)(a_1=x'_1\wedge\cdots\wedge a_r=x'_r\wedge A_1))\]

where $A_1$ denotes the atom $F(x'_1,\ldots,x'_r,x_1,\ldots,x_m)=0))$.\footnote{In this case, the essential information of the concept can be, at least formally, codified more in the atom $A_1$ rather than in the initial polynomial expressions.}

In conclusion, for $\mathbb{Z}$ expressed in the language $L=\left\{0,1,+,-,*,=,< \right\}$ a concept $C$ describing an $n-$ary property $\Omega$ has functional conceptual substratum if and only if $\Omega$ is recursively enumerable, which is equivalent to being Diophantine.

\subsection{The Notion of a Prime Number}

By the former considerations, the set of prime numbers has an functional conceptual substratum. More explicitly, one can find an explicit polynomial inequality in the integers characterizing the positive prime numbers. For example, based on the main result of \cite{jonesetal} we can describe an (atomic) conceptual substratum of the prime numbers as follows

\[CS(\text{Prime Numbers})=[k,(k+2)(1-(wz+h+j-q)^2\]

\[-((gk+2g+k+1)(h+j)+h-z)^2-(2n+p+q+z-e)^2-\]

\[(16(k+1)^3(k+2)(n+1)^2+1-f^2)^2-(e^3(e+2)(a+1)^2+1-o^2)^2\]

\[-((a^2+1)y^2+1-x^2)^2-(16r^2y^4(a^2-1)+1-u^2)^2\]

\[(((a+u^2(u^2+a))^2-1)(n+4dy)^2+1-(x+cu)^2)^2-(n+l+v-y)^2\]

\[-((a^2-1)l^2+1-m^2)^2-(ai+k+1-l-i)^2\]

\[-(p+l(a-n-1)+b(2an+2a-n^2-2n-2)-m)^2\]

\[-(q+y(a-p-1)+s(2ap+2a-p^2-2p-2)-x)^2\]

\[-(z+pl(a-p)+t(2ap-p^2-1)-mp)^2)>0,\]

\[b,c,d,e,f,g,h,i,h,j,l,m,n,p,q,r,s,t,u,v,w,x,y,z\in \mathbb{N}]\]

In addition, by Lagrange's theorem and by adding four new existential quantified variables replacing each of the former 26 variables, one can show that there exists a polynomial $P(x_1,\ldots,x_{108})$ with integer coefficients, such that 

\[CS(\text{Prime Numbers})=[x_1,P(x_1,\ldots,x_{108})>0,x_2,\ldots,x_{108}\in \mathbb{Z}]\]

So, the concept of prime numbers has an functional conceptual substratum over $\mathbb{Z}$ described in the former language.

Now, let us focus on the question of deciding if the concept of prime numbers has an functional conceptual substratum where the atoms $A_j$ have either the form $x_{r_j}<c_j$ or $c_j<x_{c_j}$.

So, essentially this question is equivalent to finding a polynomial with integer coefficients $f(x_1,\ldots,x_n)$ such that the set of the prime numbers is generated as the image of the domain defined by the atomic restrictions $A_1,\ldots,A_m$. Let us prove by induction on $n$ that this cannot happen. 

First, let us suppose that $f(x)$ is a polynomial in one variable with restrictions given by $A_1\cong x<c_1$ and/or $A_2\cong c_2<x$. The case where the domain is either empty or finite (parametrized by two atoms) is clearly ruled out, since its image should be an infinite set. The single cases given by just one of the former atoms can be reduced to the case $x>c_1$, because the second case can be reduced to this one by means of the change of variables $y=-x$. 

In conclusion, let us assume by the sake of contradiction that there exists a polynomial $f(x)$ with integer coefficients together with a constant $c\in \mathbb{Z}$ such that the image under $f$ of the set $\mathbb{Z}_{>c}$ is the set of the prime numbers (or an infinite subset of it). 
Let us choose an integer $d>c$. If we denote by $p$ the prime number $f(d)$, it is an elementary fact to see that for all $z\in \mathbb{Z}$ 

\[f(pz+d)\equiv f(d)\equiv 0\ ({\rm mod}\ p).\]

Thus, since $f(pz+d)$ should be a prime number for all $z\geq 0$, then $f(pz+d)=p$. Therefore, $f$ should be a constant polynomial, which is a contradiction.

Now, let us assume the induction's hypothesis for any $k< n$. Again, suppose by contradiction that there exists a polynomial $f(x_1,\ldots,x_n)$ with coefficients in the integers and atoms (restrictions) $A_1,\ldots,A_m$ such that the image of the domain determined by the restrictions consists of (an infinite subset of) the prime numbers. Again, by doing suitable changes and permutations of variables we can assume without loss of generality that there exists $s\in \mathbb{Z}$ with $1\leq s \leq m$, and constants $c_i\in\mathbb{Z}$ such that $A_i\cong x_i>c_i$, for all $i=1,\ldots,s$. Thus, since there are just finitely many potential choices for the values of the $x_i$'s (with $i>s$) which satisfy the restrictions, we see that there are constants $e_{s+1},\ldots,e_n\in \mathbb{Z}$ satisfying all the remaining conditions $A_{s+1},\ldots,A_m$, such that the image of the domain described by the first $s$ atomic restrictions under the polynomial 

\[g(x_1,\ldots,x_s)=f(x_1,\ldots,x_s,e_{s+1},\ldots,e_m)\]

 is an infinite subset of the prime numbers. So, if $s<m$ we are done by the induction's hypothesis, since $g$ has fewer variables than $f$. 

In the second case, it is an elementary fact to see that for any non-constant polynomial $g(x_1,\ldots,x_s)$ in several variables with integer (or even real) coefficients, and for any parameters $c_1,\ldots,c_s\in \mathbb{R}$ (defining atomic restriction as before), there exists an index $i_1$ and an integer (resp. a real number) $e>c_{i}$ such that $h=f(x_1,\ldots,x_{i-1},e,x_{i+1},\ldots,x_s)$ is a non-constant polynomial. 

Now, using this fact, we obtain a non-constant polynomial $h$ in $s-1$ variables, such that the image of the remaining restrictions under $h$ is an infinite subset of the prime numbers, which is a contradiction.

Summarizing, the existence of functional conceptual representations depends strongly on the degree of freedom that we give to the corresponding atomic formulas.

On the other hand, let us modify the language slightly by trying to characterize the prime numbers as a kind of `sub-concept' of the natural numbers $\mathbb{N}$ with the language $L^-=\left\{0,1,+,*,=,<\right\}$, and with the former constrains for the atoms $A_i$. So, by applying basically the same method as before, we obtain again a negative answer.

However, if we do not impose any kind of restriction on the atoms, then using the same former result of Jones et at. one can find two explicit polynomial $P_1(a,b,\ldots,z)$ and $P_2(a,b,\ldots,z)$ with coefficients into the natural numbers such that 
\[ CS(\text{Prime Numbers})=[k,P_1(a,b,\ldots,z)>P_2(a,b,\ldots,z),b,c,\ldots,z]\]
So, the notion of prime numbers has also an functional conceptual substratum over $\mathbb{N}$ with the restricted language $L^-$.

\subsection{The Arithmetical Invariance of Functional Conceptual Substratum}

More generally, if we restrict ourselves to a concept $C$ described by a $n-$ relation in $\mathbb{N}$, then the fact that $C$ has an functional conceptual substratum does not change if we expand the language involved (resp. the corresponding structure) by adding the operation $-()$. Specifically, the following general fact holds:

\begin{proposition}
Let $C$ be a concept described by a $r-$ary relation $D$ in $\mathbb{N}$. Then $C$ has an functional conceptual substratum in $L^-$, if and only if $C$ (seen as a concept described by the corresponding $r-$ary relation $D\subseteq \mathbb{Z}$) has an functional conceptual substratum in $L$.
\end{proposition}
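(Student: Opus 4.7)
My approach is to translate primitive-positive formulas between the two settings, relying on the identification of functional conceptual substratum with primitive positive definability that is already noted after the definition in the paper. In each direction I keep the shape $\exists \bar x\,(\bar y = \bar t \wedge \bigwedge A_j)$; the work is in rewriting the $L$-terms/atoms as $L^-$-terms/atoms, or vice versa, without breaking this pp-shape.

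For the implication from $\mathbb{N}$ in $L^-$ to $\mathbb{Z}$ in $L$, I start with a defining pp-formula
\[
\phi(y_1,\ldots,y_r)\equiv\exists x_1\cdots x_n\,\bigl(y_1=t_1\wedge\cdots\wedge y_r=t_r\wedge A_1\wedge\cdots\wedge A_m\bigr)
\]
for $D$ in $\mathbb{N}$. Since every symbol of $L^-$ belongs to $L$, $\phi$ is already a pp-formula of $L$; however, interpreted over $\mathbb{Z}$ it may acquire spurious tuples with negative components. I fix this by grafting in a Lagrange four-square constraint for every variable that must stay in $\mathbb{N}$: for each quantified $x_i$ I add fresh existential witnesses $z_{i,1},\ldots,z_{i,4}$ together with the atom $x_i=z_{i,1}^2+z_{i,2}^2+z_{i,3}^2+z_{i,4}^2$, and I append an analogous guard (with further fresh existential witnesses) for each free variable $y_j$. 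This yields a pp-formula $\psi$ in $L$ which, by Lagrange's theorem, defines precisely $D\subseteq\mathbb{Z}^r$ over $\mathbb{Z}$.

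For the converse implication I must eliminate the symbol $-$. Given a pp-defining formula $\phi$ of $L$ for $D$ in $\mathbb{Z}$, I introduce, for each quantified $x_i$, two fresh $L^-$-variables $x_i^+,x_i^-$ meant to represent $x_i=x_i^+-x_i^-$. An easy induction on term complexity shows that every $L$-term $u(x_1,\ldots,x_n)$ can be written as a formal difference $U^+(\bar x^+,\bar x^-)-U^-(\bar x^+,\bar x^-)$ of two $L^-$-terms with coefficients in $\mathbb{N}$. I then rewrite each atom: $u_1=u_2$ becomes $U_1^++U_2^-=U_2^++U_1^-$, $u_1<u_2$ becomes $U_1^++U_2^-<U_2^++U_1^-$, and $y_j=t_j$ (with $y_j\in\mathbb{N}$) becomes $y_j+T_j^-=T_j^+$; each of these is an $L^-$-atom. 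Replacing every $\exists x_i$ in $\phi$ by $\exists x_i^+\exists x_i^-$ then produces a pp-formula $\psi$ in $L^-$; using the bijection $\mathbb{N}^2\twoheadrightarrow\mathbb{Z}$, $(a,b)\mapsto a-b$ on quantified variables, I check that $\psi$ defines $D$ in $\mathbb{N}$.

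The only delicate point I anticipate is in the first direction, where it is tempting to guard only the quantified variables and forget about the $y_j$: without the Lagrange constraint on the parameters, the lifted formula would define the $\mathbb{Z}$-shadow of $D$ rather than $D$ itself. Apart from this bookkeeping, both translations are routine and, by construction, preserve the primitive-positive shape required by the definition of functional conceptual substratum.
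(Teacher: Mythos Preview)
Your argument is correct. In the direction $L^{-}/\mathbb{N}\Rightarrow L/\mathbb{Z}$ you do essentially what the paper does: force the bound variables into $\mathbb{N}$ via Lagrange's four-square theorem (the paper substitutes $x_j\mapsto \sum_{i=1}^{4}y_{j,i}^{2}$, you add the equivalent atoms). In the direction $L/\mathbb{Z}\Rightarrow L^{-}/\mathbb{N}$, however, your route is genuinely different from, and cleaner than, the paper's. The paper first normalises everything to a single polynomial equation (eliminating $<$ via Lagrange and merging equational conjuncts via a sum of squares), then replaces the $\mathbb{Z}$-existentials by a disjunction over all $2^{n+1}$ sign patterns, and finally collapses that disjunction into a product of the resulting polynomials. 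Your splitting $x_i=x_i^{+}-x_i^{-}$ together with the atom-by-atom rewriting $u_1=u_2\ \mapsto\ U_1^{+}+U_2^{-}=U_2^{+}+U_1^{-}$ (and the analogous rule for $<$) avoids this exponential blow-up entirely and never leaves the primitive-positive fragment; it is both shorter and gives a polynomial-size translation where the paper's is exponential in the number of bound variables.

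Two small remarks. First, the ``delicate point'' you single out is not actually delicate: since each $t_j$ is an $L^{-}$-term, once the bound variables have been forced into $\mathbb{N}$ the equation $y_j=t_j(\bar x)$ already forces $y_j\in\mathbb{N}$, so your Lagrange guards on the free parameters are redundant (the paper omits them). Second, your rewriting $y_j=t_j\ \mapsto\ y_j+T_j^{-}=T_j^{+}$ does not literally have the shape $y_j=t'_j$ that the definition of functional conceptual substratum requires; you are implicitly falling back on the equivalence with primitive positive definability that you cite at the outset. Alternatively, insert a fresh bound variable $z_j$ with $y_j=z_j$ and move the rewritten equation into the list of atoms, which is exactly the auxiliary-variable trick the paper uses at the start of its own argument.
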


\begin{proof}
Without loss of generality, we can assume that $r=1$ (the general argument is essentially the same).
First, let us suppose that there is an $L-$functional conceptual substratum for $C$ involving $f(x_1,\ldots,x_n)$ and atoms $A_1,\ldots,A_r$. Now, we will add an extra variable $z$ in order to be able to codify the fact that $a_1=f(x_1,\ldots,x_n)$ through the atoms $a_1=z$ and $A_{r+1}\equiv z=f(x_1,\ldots,x_n)$. This allows us to update $f$ by a polynomial with positive coefficients.

By Lagrange's theorem and by adding (eventually) new existential quantified variables, we can assume that all the atoms involve only the equality relation. Effectively, this follows from the relations

\[(\forall a,b\in \mathbb{Z})(a<b \leftrightarrow a+1\leq b),\]

\[(\forall c,d\in \mathbb{Z})(c\leq d \leftrightarrow(\exists y_1,y_2,y_3,y_4 \in \mathbb{Z})(d-c=\sum_{i=1}^4y_i^2)).\]

An additional simplification consists in reducing the number of atoms to one, by using the fact that 

\[(\forall e,g \in \mathbb{Z}((e=0 \wedge g=0) \leftrightarrow e^2+g^2=0)).\] 
So, let us assume the we have just one atom $A$.

 Furthermore, the fact that there are existential conditions for $A$ involving variables $z$ and 
$x_1,\ldots,x_n$ varying over $\mathbb{Z}$, can be re-written as new atom $A'$ involving variables $z'$ and $x'_1,\ldots,x'_n$ varying now over $\mathbb{N}.$

In fact, if $A\equiv h_1(z,x_1,\ldots,x_n)=h_2(z,x_1,\ldots,x_n)$, then the fact that there exists $z,x_1,\ldots,x_n\in \mathbb{Z}$ such that $A(z,x_1,\ldots,x_n)$ is equivalent to say that that there exist $z',x'_1,\ldots,x'_n\in \mathbb{N}$ such that 

\[\bigvee (h_1(\pm z,\pm x'_1,\ldots,\pm x'_n)=h_2(\pm z,\pm x'_1,\ldots, \pm x'_n)),\]

where the former expression involves $2^{n+1}$ atoms corresponding to all the possible combinations of signs. 
Now, by writing each of the former equalities as $\varphi_j(\underline{x'})=0$, for $j=1,\ldots,2^{n+1}$, we can re-write the former expression as the single atomic condition

\[\Phi(z',\underline{x'})=\prod_{j=1}^{2^{n+1}}\varphi_j(z',\underline{x'})=0.\]

Finally, we can re-write this condition as a polynomial equality of the form $\gamma_1(z',\underline{x'})=\gamma_2(z',\underline{x'})$ involving only positive coefficients.

So, for all $a\in \mathbb{N}$, $a\in D$ if and only if 

\[(\exists w_1 \cdots w_{n+1})(a=w_1\wedge \gamma_1(w_1,\ldots,w_{n+1})=\gamma_2(w_1,\ldots,w_{n+1})).\]

This means that $C$ has $L-$functional conceptual substratum.

Conversely, we replace in a $L^--$functional conceptual subtratum, any variable $x_j$ by four variables $y_{j,1},y_{j,2},y_{j,3}$ and $y_{j,4}$; and we replace each occurrence of $x_j$ by $\sum_{i=1}^4y_i^2$. So, by Lagrange's theorem, we obtain an $L-$functional conceptual substratum for $C$.
\end{proof}

\begin{remark}
If we replace in the former proposition functional conceptual substratum by Diophantine, then the answer is quite different. Effectively, by the MRDP theorem we know that the `Diophantine' $L-$concepts are exactly the recursively enumerable. However, the set of Diophantive $L^--$concepts corresponds to a strictly smaller sub-collection of them. Specifically, if $r=1$, then it is an elementary exercise to prove that the only two Diophantine $L^--$subsets of $\mathbb{N}$ (i.e. subsets described as projections over $\mathbb{N}$ of a polynomial with non-negative coefficients) are $\{0\}$ and $\mathbb{N}$. In general, one can verify by induction over $r$ that a subset $\Omega\subseteq\mathbb{N}^r$ is $L^--$Diophantine if it has the form

\[\bigcup_{\underline{i}=(i_1,\ldots,i_k)}^{\text{finite}}\prod_{r=1}^k\mathbb{N}^{(i_r)},\]

where $i_s\in\{0,1\}$ and we define $\mathbb{N}^0=\{0\}$ and $\mathbb{N}^1=\mathbb{N}$.
\end{remark}

\section{Natural and Complete Definition Rules for Functional Conceptual Substratum}

Let us denote by $\mathrm{LK}_\mathrm{e}$ the sequent calculus for first-order predicate logic with equality (over a language $L$) with the standard inference rules (see for instance \cite{buss}, \cite{takeuti}). Let us enlarge the language $L$ with a new $r-$ary predicate symbol $D$ which we will define
in terms of a functional conceptual substratum in the language $L$, i.e., by a definition of the form
\[D(a_1,\ldots,a_r) \Leftrightarrow  (\exists x_1 \cdots x_n)(a_1=t_1\wedge\cdots\wedge a_r = t_r\wedge
 A_1\wedge \cdots \wedge A_m)\]
where $t_1,\ldots,t_n$ are $L-$terms and $A_1,\ldots,A_m$ are $L$-atoms whose variables are (both) among $x_1,\ldots,x_n$.

Now, a standard approach to incorporate definitions into a sequent calculus is to add definition rules which
allow unfolding the defined predicate symbol. In our setting this gives rise to the rules
\[
\begin{array}{c}
\infer[D_\mathrm{L}]{D(a_1,\ldots,a_r),\Gamma \rightarrow \Delta}{\phi(a_1,\ldots,a_r),\Gamma\rightarrow \Delta}
\end{array}
\qquad\text{and}\qquad
\begin{array}{c}
\infer[D_\mathrm{R}]{\Gamma \rightarrow \Delta,D(a_1,\ldots,a_r)}{\Gamma \rightarrow \Delta,\phi(a_1,\ldots,a_r)}
\end{array}
\]
where  $\phi(a_1,\ldots,a_r)$ abbreviates the formula defining $D(a_1,\ldots,a_r)$ as above. We denote the sequent
calculus obtained from adding these rules to $\mathrm{LK}_\mathrm{e}$ as $\mathrm{LK}_\mathrm{e}(D)$.
These rules correspond to inferences that syntactically replace into a proof the former definition of the new relational symbol within the left and right part of a sequent, respectively.

\begin{lemma}\label{lem.LKeD}
For any formula $\psi$, $\mathrm{LK}_\mathrm{e}(D) \vdash \psi \leftrightarrow \psi[D\backslash \phi]$, where $\psi[D\backslash \phi]$ denotes the formula obtained after replacing $D$ by $\phi$ in $\psi$.
\end{lemma}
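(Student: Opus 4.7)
The plan is to prove this by structural induction on the formula $\psi$, after first establishing the key base case: that the calculus proves the defining equivalence $D(a_1,\ldots,a_r) \leftrightarrow \phi(a_1,\ldots,a_r)$.

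For this base case I would derive the two implications separately. For $D(\vec{a}) \to \phi(\vec{a})$, I start from the initial sequent $\phi(\vec{a}) \rightarrow \phi(\vec{a})$ (obtainable from atomic initial sequents of $\mathrm{LK}_\mathrm{e}$ by propositional and quantifier rules), apply $D_\mathrm{L}$ to get $D(\vec{a}) \rightarrow \phi(\vec{a})$, and close with the right-implication rule. Symmetrically, starting from the same sequent and applying $D_\mathrm{R}$ yields $\phi(\vec{a}) \rightarrow D(\vec{a})$. These two implications together give the equivalence $D(\vec{a}) \leftrightarrow \phi(\vec{a})$ inside $\mathrm{LK}_\mathrm{e}(D)$.

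Next I carry out the induction on the complexity of $\psi$. The atomic case splits in two: if $\psi$ is not a $D$-atom then $\psi[D\backslash\phi] = \psi$ and the equivalence is the trivial reflexivity, while if $\psi = D(\vec{a})$ it is exactly the base case above. For the inductive step I use the standard fact about $\mathrm{LK}_\mathrm{e}$: if $\alpha \leftrightarrow \alpha'$ and $\beta \leftrightarrow \beta'$ are both derivable, then so are $(\alpha \circ \beta) \leftrightarrow (\alpha' \circ \beta')$ for each propositional connective $\circ \in \{\wedge, \vee, \rightarrow\}$, and $\neg \alpha \leftrightarrow \neg \alpha'$; likewise $Q x\, \alpha \leftrightarrow Q x\, \alpha'$ for $Q \in \{\forall, \exists\}$, obtained by the appropriate quantifier rules with the eigenvariable chosen fresh. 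In each case I combine the equivalences given by the inductive hypothesis applied to immediate subformulas with these congruence derivations, observing that substitution commutes with connectives and quantifiers (so $(\alpha \circ \beta)[D\backslash \phi] = \alpha[D\backslash\phi]\circ\beta[D\backslash\phi]$, etc.).

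I expect no essential obstacle: the real content is the base case, which is immediate from the shape of the definition rules, and the rest is a routine congruence-of-equivalence argument inside $\mathrm{LK}_\mathrm{e}$. The only point that requires mild care is the quantifier case, where the eigenvariable in the inductive-hypothesis derivation must be chosen not to clash with variables free in $\phi$; since $\phi$ is a fixed formula this is a matter of renaming bound variables, and it does not affect the argument.
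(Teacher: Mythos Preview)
Your proposal is correct and follows essentially the same approach as the paper: an induction on the syntactic complexity of $\psi$, with the base case for $D$-atoms handled via the rules $D_\mathrm{L}$ and $D_\mathrm{R}$ and the inductive step by standard congruence-of-equivalence reasoning. The paper's proof is merely a one-sentence sketch of exactly this argument, so your version is a faithful and more detailed elaboration of it.
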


\begin{proof}
This fact can be straightforwardly proved by induction on the (syntactic) complexity of $\psi$, decomposing the equivalence into two implications and using the new pair of rules.
\end{proof}

The calculus $\mathrm{LK}_\mathrm{e}(D)$ is a conservative extension of $\mathrm{LK}_\mathrm{e}$ in
the following sense:
\begin{theorem}
For any formula $\psi$, $\mathrm{LK}_\mathrm{e}(D)\vdash \psi$ if and only if $\mathrm{LK}_\mathrm{e} \vdash \psi[D\backslash \phi]$.
\end{theorem}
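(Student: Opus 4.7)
The plan is to prove the two directions separately, with the harder (forward) direction handled by a proof-transformation argument and the easier (backward) direction handled by the preceding lemma.

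For the forward direction, I would proceed by induction on the structure of a given $\mathrm{LK}_\mathrm{e}(D)$-proof $\pi$ of $\psi$, uniformly applying the substitution $[D \backslash \phi]$ to every formula occurring in $\pi$. The key observation is that since $\phi$ is by assumption an $L$-formula, $D$ does not occur in $\phi$, so $\phi[D\backslash\phi] = \phi$. Now for an application of $D_\mathrm{L}$ with premise $\phi(a_1,\ldots,a_r),\Gamma \rightarrow \Delta$ and conclusion $D(a_1,\ldots,a_r),\Gamma\rightarrow\Delta$, after the substitution the premise becomes $\phi(a_1,\ldots,a_r),\Gamma[D\backslash\phi]\rightarrow\Delta[D\backslash\phi]$ and the conclusion becomes the identical sequent; hence the rule collapses and can simply be deleted. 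The analogous argument applies to $D_\mathrm{R}$. All remaining structural, logical and equality rules of $\mathrm{LK}_\mathrm{e}$ are stable under uniform substitution of predicate symbols, so the transformed derivation is a valid $\mathrm{LK}_\mathrm{e}$-proof of $\psi[D\backslash\phi]$.

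For the backward direction, I would exploit Lemma \ref{lem.LKeD} directly. Suppose $\mathrm{LK}_\mathrm{e}\vdash \psi[D\backslash\phi]$. Since $\mathrm{LK}_\mathrm{e}(D)$ extends $\mathrm{LK}_\mathrm{e}$, we also have $\mathrm{LK}_\mathrm{e}(D)\vdash \psi[D\backslash\phi]$. By the lemma, $\mathrm{LK}_\mathrm{e}(D)\vdash \psi \leftrightarrow \psi[D\backslash\phi]$, and combining the two via standard implication-elimination and cut yields $\mathrm{LK}_\mathrm{e}(D) \vdash \psi$.

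The only delicate step I anticipate is bookkeeping for the eigenvariable conditions of $\forall_\mathrm{R}$ and $\exists_\mathrm{L}$ in the substituted proof: one must check that applying $[D\backslash\phi]$ does not introduce free variables that violate these conditions. Because $\phi$ is fixed and its free variables are exactly $a_1,\ldots,a_r$ (the arguments substituted at each occurrence of $D$), no new eigenvariables are captured, and this can be verified by a routine renaming argument. The rest is bookkeeping on the inference rules of $\mathrm{LK}_\mathrm{e}$.
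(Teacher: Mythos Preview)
Your proposal is correct and follows essentially the same approach as the paper: the forward direction is the same substitute-$D$-by-$\phi$-and-delete-the-definition-rules argument, and the backward direction is exactly the paper's use of Lemma~\ref{lem.LKeD} together with a cut. Your write-up is simply more detailed (in particular the remark on eigenvariable conditions), but there is no substantive difference in strategy.
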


\begin{proof}
$(\Rightarrow)$
Let $P$ be a proof of $\psi$ in $\mathrm{LK}_\mathrm{e}(D)$. Then, by replacing $D$ in $P$ by $\phi$ and
removing $D_\mathrm{L}$- and $D_\mathrm{R}$-inferences, we obtain a proof $P'$ of $\psi[D\backslash \phi]$
in $\mathrm{LK}_\mathrm{e}$.

$(\Leftarrow)$
Let $P$ be an $\mathrm{LK}_\mathrm{e}$-proof of $\psi[D\backslash \phi]$. Obtain an $\mathrm{LK}_\mathrm{e}(D)$-proof $Q$ of $\psi[D\backslash \phi] \rightarrow \psi$ from Lemma~\ref{lem.LKeD}. Then a cut on $P$ and $Q$ gives an $\mathrm{LK}_\mathrm{e}(D)$-proof of $\psi$.
\end{proof}

The above definition rules treat definitions in general. However, a definition of a concept that has a functional conceptual substratum is typically used in a more specific way in mathematical proofs. For example, when showing
that the sum of $n$ and $m$ is even if $m$ and $n$ are, one may start the proof by a phrase like ``Since $n$ is even, $n=2a$ (for some $a\in\mathbb{N}$)''. For the general case, this is formalized by the rule
\begin{prooftree}
\AxiomC{$a_1=t_1[\underline{x}\backslash\underline{\zeta}],\ldots,a_r=t_r[\underline{x}\backslash\underline{\zeta}],A_1[\underline{x}\backslash\underline{\zeta}],\ldots,A_m[\underline{x}\backslash\underline{\zeta}], \Gamma \rightarrow \Delta$}
\RightLabel{$D^\mathrm{fcs}_\mathrm{L}$}
\UnaryInfC{$D(a_1,\ldots,a_r),\Gamma \rightarrow \Delta$}
\end{prooftree}
Similarily, one may end the proof with a phrase like ``$2\cdot(a+b)$ is even''. For the general case, this is formalized by the rule
\[
\infer[D^\mathrm{fcs}_\mathrm{R}]{\Gamma \rightarrow \Delta, D(t_1[\underline{x}\backslash\underline{u}],\ldots,t_r[\underline{x}\backslash\underline{u}])}{
  \Gamma \rightarrow \Delta, A_1[\underline{x}\backslash\underline{u}]
  &
  \cdots
  &
  \Gamma \rightarrow \Delta, A_m[\underline{x}\backslash\underline{u}]
}
\]

We write $\mathrm{LK}^\mathrm{fcs}_\mathrm{e}$ for the calculus obtained from $\mathrm{LK}_\mathrm{e}$ by
adding these two rules. We will now verify that $\mathrm{LK}^\mathrm{fcs}_\mathrm{e}(D)$ is sound and complete w.r.t.\ $\mathrm{LK}_\mathrm{e}(D)$.
To that aim, we first relate it to $\mathrm{LK}_\mathrm{e}$.

\begin{lemma}\label{lem.LKeDfcs}
For any formula $\psi$, $LK_\mathrm{e}^\mathrm{fcs}(D) \vdash \psi \leftrightarrow \psi[D\backslash \phi]$.
\end{lemma}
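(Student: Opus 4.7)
The plan is to proceed by induction on the syntactic complexity of $\psi$, in the same spirit as the proof of Lemma~\ref{lem.LKeD}. The propagation through the logical connectives ($\neg,\wedge,\vee,\rightarrow,\forall,\exists$) is completely standard: once equivalence is established for the immediate subformulas, one decomposes $\psi \leftrightarrow \psi[D\backslash \phi]$ into two implications and applies the corresponding left/right rules of $\mathrm{LK}_\mathrm{e}$ to both sides simultaneously. Similarly, atomic formulas whose predicate symbol is not $D$ satisfy $\psi[D\backslash\phi]=\psi$ trivially, so the equivalence reduces to a logical axiom. Hence the only substantive case is when $\psi$ is $D(a_1,\ldots,a_r)$, where one must derive both implications
\[
D(a_1,\ldots,a_r)\rightarrow\phi(a_1,\ldots,a_r)\quad\text{and}\quad\phi(a_1,\ldots,a_r)\rightarrow D(a_1,\ldots,a_r)
\]
using the new rules together with the equality rules of $\mathrm{LK}_\mathrm{e}$.

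For the first implication, I would apply $D^\mathrm{fcs}_\mathrm{L}$ with a tuple of fresh eigenvariables $\underline{\zeta}=(\zeta_1,\ldots,\zeta_n)$. This reduces the goal to deriving the sequent
\[
a_1=t_1[\underline{x}\backslash\underline{\zeta}],\ldots,a_r=t_r[\underline{x}\backslash\underline{\zeta}],A_1[\underline{x}\backslash\underline{\zeta}],\ldots,A_m[\underline{x}\backslash\underline{\zeta}]\rightarrow\phi(a_1,\ldots,a_r).
\]
Since $\phi(a_1,\ldots,a_r)$ is the existential closure of the very conjunction appearing in the antecedent, this sequent follows by $m+r-1$ applications of $\wedge_\mathrm{R}$ to assemble the conjunction, followed by $n$ applications of $\exists_\mathrm{R}$ using $\zeta_i$ as the witness for $x_i$.

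For the converse implication, I would first strip away the $n$ existentials of $\phi$ via $\exists_\mathrm{L}$ using fresh eigenvariables $\underline{u}=(u_1,\ldots,u_n)$, and decompose the resulting conjunction via repeated $\wedge_\mathrm{L}$. This reduces the goal to
\[
a_1 = t_1[\underline{x}\backslash\underline{u}],\ldots,a_r = t_r[\underline{x}\backslash\underline{u}],A_1[\underline{x}\backslash\underline{u}],\ldots,A_m[\underline{x}\backslash\underline{u}]\rightarrow D(a_1,\ldots,a_r).
\]
Now I would apply $D^\mathrm{fcs}_\mathrm{R}$ (with the same substitution $\underline{x}\backslash\underline{u}$) to derive $D(t_1[\underline{x}\backslash\underline{u}],\ldots,t_r[\underline{x}\backslash\underline{u}])$ on the right-hand side; each of its premises $\Gamma\rightarrow A_i[\underline{x}\backslash\underline{u}]$ is immediate from the antecedent via an axiom sequent and weakening. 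Finally, the $r$ equalities $a_i=t_i[\underline{x}\backslash\underline{u}]$ in the antecedent allow one, by $r$ applications of the equality (substitution) rule of $\mathrm{LK}_\mathrm{e}$, to rewrite the term $t_i[\underline{x}\backslash\underline{u}]$ to $a_i$ inside the $D$-atom, yielding $D(a_1,\ldots,a_r)$ as required.

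The main obstacle I expect is bookkeeping: ensuring that the eigenvariable conditions for $\underline{\zeta}$ (in $\exists_\mathrm{R}$'s dual use within $D^\mathrm{fcs}_\mathrm{L}$) and $\underline{u}$ (in $\exists_\mathrm{L}$) are satisfied, and that the successive equality rewrites are applied in an order compatible with any variable overlap between the $t_i$. Both difficulties are easily handled here, because the derivations above carry empty side-contexts $\Gamma,\Delta$, so $\underline{\zeta}$ and $\underline{u}$ can be chosen disjoint from all variables occurring in $a_1,\ldots,a_r,t_1,\ldots,t_r,A_1,\ldots,A_m$; and the equality rewrites, being substitutions into atomic formulas, commute freely. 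Once the atomic case is settled, the inductive step gives the lemma.
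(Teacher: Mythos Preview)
Your proof is correct and follows essentially the same route as the paper: induction on $\psi$, with the only nontrivial case being $\psi = D(a_1,\ldots,a_r)$, handled by $D^\mathrm{fcs}_\mathrm{L}$ plus $\wedge_\mathrm{r}/\exists_\mathrm{r}$ in one direction and $\exists_\mathrm{l}/\wedge_\mathrm{l}$ plus $D^\mathrm{fcs}_\mathrm{R}$ and equality in the other. The paper's derivation uses the equality step and $D^\mathrm{fcs}_\mathrm{R}$ in the opposite order to yours (rewriting $D(a_1,\ldots,a_r)$ to $D(t_1[\underline{\zeta}],\ldots,t_r[\underline{\zeta}])$ before applying $D^\mathrm{fcs}_\mathrm{R}$), but this is an inessential permutation.
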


\begin{proof}
We proceed by induction on the syntactic complexity of $\psi$. The only non-trivial case is when $\psi$ is $D(v_1,\ldots,v_r)$.

We obtain an $\mathrm{LK}_\mathrm{e}^\mathrm{fcs}(D)$-proof of $D(v_1,\ldots,v_r) \rightarrow \phi(v_1,\ldots,v_r)$ by applying a $D_\mathrm{L}^\mathrm{fcs}$-inference,
$n$ $\exists_\mathrm{r}$-inferences, and $r+m-1$ $\wedge_\mathrm{r}$-inferences.

In the other direction, we obtain an $\mathrm{LK}_\mathrm{e}^\mathrm{fcs}(D)$-proof of $\phi(v_1,\ldots,v_r) \rightarrow D(v_1,\ldots,v_r)$ as follows:
\[
\infer[\exists_\mathrm{l}^n]{\phi(v_1,\ldots,v_r) \rightarrow D(v_1,\ldots,v_r)}{
  \infer[\wedge_\mathrm{l}^{r+m-1}]{v_1 = t_1[\underline{x}\backslash\underline{\zeta}]\wedge \cdots \wedge v_r = t_r[\underline{x}\backslash\underline{\zeta}] \wedge A_1[\underline{x}\backslash\underline{\zeta}] \wedge \cdots \wedge A_m[\underline{x}\backslash\underline{\zeta}] \rightarrow D(v_1,\ldots,v_r)}{
    \infer[\mathrm{eq.}]{v_1 = t_1[\underline{x}\backslash\underline{\zeta}], \ldots, v_r = t_r[\underline{x}\backslash\underline{\zeta}], A_1[\underline{x}\backslash\underline{\zeta}],\ldots,A_m[\underline{x}\backslash\underline{\zeta}] \rightarrow D(v_1,\ldots,v_r)}{
      \infer[D_\mathrm{R}^\mathrm{fcs}]{A_1[\underline{x}\backslash\underline{\zeta}],\ldots,A_m[\underline{x}\backslash\underline{\zeta}]\rightarrow D(t_1[\underline{x}\backslash\underline{\zeta}],\ldots,t_r[\underline{x}\backslash\underline{\zeta}])}{
        A_1[\underline{x}\backslash\underline{\zeta}] \rightarrow A_1 [\underline{x}\backslash\underline{\zeta}]
        &
        \cdots
        &
        A_m[\underline{x}\backslash\underline{\zeta}] \rightarrow A_m[\underline{x}\backslash\underline{\zeta}]
      }
    }
  }
}
\]
\end{proof}

\begin{theorem}
For any formula $\psi$, $\mathrm{LK}_\mathrm{e}^\mathrm{fcs}(D) \vdash \psi$ if and only if $\mathrm{LK}_\mathrm{e} \vdash \psi[D\backslash \phi]$.
\end{theorem}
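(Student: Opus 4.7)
The plan is to follow the pattern of the earlier conservative extension theorem for $\mathrm{LK}_\mathrm{e}(D)$, reducing both directions to machinery already at hand, in particular Lemma~\ref{lem.LKeDfcs}.

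For the backward direction ($\Leftarrow$), the argument is immediate once Lemma~\ref{lem.LKeDfcs} is available. Given an $\mathrm{LK}_\mathrm{e}$-proof $P$ of $\psi[D\backslash\phi]$, view $P$ as an $\mathrm{LK}_\mathrm{e}^\mathrm{fcs}(D)$-proof (no new rule is used), extract from Lemma~\ref{lem.LKeDfcs} an $\mathrm{LK}_\mathrm{e}^\mathrm{fcs}(D)$-proof $Q$ of $\psi[D\backslash\phi] \rightarrow \psi$, and apply a single cut on $\psi[D\backslash\phi]$ to combine $P$ and $Q$ into an $\mathrm{LK}_\mathrm{e}^\mathrm{fcs}(D)$-proof of $\psi$.

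For the forward direction ($\Rightarrow$), the plan is to take an $\mathrm{LK}_\mathrm{e}^\mathrm{fcs}(D)$-proof $P$ of $\psi$ and translate it inductively into an $\mathrm{LK}_\mathrm{e}$-proof of $\psi[D\backslash\phi]$. Globally, I would replace every occurrence of $D$ throughout the sequents of $P$ by $\phi$ (note that $\phi$ itself is $D$-free, so this substitution is idempotent). Ordinary $\mathrm{LK}_\mathrm{e}$-inferences translate directly into $\mathrm{LK}_\mathrm{e}$-inferences, since the substitution commutes with the standard rule schemes. The two $D^\mathrm{fcs}$-inferences must be simulated: a $D^\mathrm{fcs}_\mathrm{L}$ step, whose premise after substitution is $a_1=t_1[\underline{x}\backslash\underline{\zeta}],\ldots,a_r=t_r[\underline{x}\backslash\underline{\zeta}],A_1[\underline{x}\backslash\underline{\zeta}],\ldots,A_m[\underline{x}\backslash\underline{\zeta}],\Gamma' \rightarrow \Delta'$, is replaced by $r+m-1$ applications of $\wedge_\mathrm{L}$ to build the matrix of $\phi$, followed by $n$ applications of $\exists_\mathrm{L}$ with the $\zeta_i$ as eigenvariables, yielding $\phi(a_1,\ldots,a_r),\Gamma' \rightarrow \Delta'$. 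A $D^\mathrm{fcs}_\mathrm{R}$ step is treated dually: adjoin the reflexive equalities $t_i[\underline{x}\backslash\underline{u}]=t_i[\underline{x}\backslash\underline{u}]$ to the $m$ substituted premises, combine them with $r+m-1$ $\wedge_\mathrm{R}$-inferences, and apply $n$ $\exists_\mathrm{R}$-inferences with witnesses $\underline{u}$ to obtain $\Gamma' \rightarrow \Delta', \phi(t_1[\underline{x}\backslash\underline{u}],\ldots,t_r[\underline{x}\backslash\underline{u}])$.

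The main technical point, and what I expect to be the only real obstacle, is the eigenvariable condition on the newly introduced $\exists_\mathrm{L}$-inferences: the $\zeta_i$ must be fresh with respect to $\Gamma$, $\Delta$ and the terms $a_1,\ldots,a_r$. For the soundness of $D^\mathrm{fcs}_\mathrm{L}$ itself, this freshness is necessarily an (implicit) side condition on the rule, so no new obstruction appears; one simply has to state it explicitly and verify that it is inherited by the simulating $\exists_\mathrm{L}$-steps. Once this is dispatched, both translations are legitimate derivations in the respective calculi, and the theorem follows.
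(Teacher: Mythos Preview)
Your proposal is correct and follows essentially the same approach as the paper: the $(\Leftarrow)$-direction is obtained from Lemma~\ref{lem.LKeDfcs} via a cut, and the $(\Rightarrow)$-direction replaces $D$ by $\phi$ throughout and simulates $D^\mathrm{fcs}_\mathrm{L}$ by $\wedge_\mathrm{l}$- and $\exists_\mathrm{l}$-inferences and $D^\mathrm{fcs}_\mathrm{R}$ by reflexivity, $\wedge_\mathrm{r}$- and $\exists_\mathrm{r}$-inferences. Your explicit remark on the eigenvariable condition for the simulated $\exists_\mathrm{l}$-steps is a point the paper leaves implicit.
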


\begin{proof}
$(\Rightarrow)$
Let $P$ be a proof of $\psi$ in $\mathrm{LK}^\mathrm{fcs}_\mathrm{e}(D)$. We replace $D$ in $P$ by $\phi$,
simulating a $D_\mathrm{L}^\mathrm{fcs}$-inferece by $n$ $\exists_\mathrm{l}$-inferences, and $m+r-1$ $\wedge_\mathrm{l}$-inferences
and a $D_\mathrm{R}^\mathrm{fcs}$-inference by
\[
\infer[\exists_\mathrm{r}^n]{\Gamma \rightarrow \Delta, \phi(t_1[\underline{x}\backslash\underline{u}], \ldots, t_r[\underline{x}\backslash\underline{u}])}{
  \infer[\mathrm{eq.}]{\Gamma\rightarrow\Delta, t_1[\underline{x}\backslash\underline{u}] = t_1[\underline{x}\backslash\underline{u}] \wedge \cdots \wedge t_r[\underline{x}\backslash\underline{u}] = t_r[\underline{x}\backslash\underline{u}] \wedge A_1[\underline{x}\backslash\underline{u}] \wedge \cdots \wedge A_m[\underline{x}\backslash\underline{u}]}{
    \infer[\wedge_\mathrm{r}^{m-1}]{\Gamma\rightarrow\Delta, A_1[\underline{x}\backslash\underline{u}] \wedge \cdots \wedge A_m[\underline{x}\backslash\underline{u}]}{
      \Gamma \rightarrow \Delta, A_1[\underline{x}\backslash\underline{u}]
      &
      \cdots
      &
      \Gamma \rightarrow \Delta, A_m[\underline{x}\backslash\underline{u}]
    }
  }
}
\]

Thus we obtain a proof $P'$ of $\psi[D\backslash \phi]$ in $LK_\mathrm{e}$

$(\Leftarrow)$
Let $P$ be an $\mathrm{LK}_\mathrm{e}$-proof of $\psi[D\backslash \phi]$. Obtain an $\mathrm{LK}_\mathrm{e}(D)$-proof $Q$ of $\psi[D\backslash \phi] \rightarrow \psi$ from Lemma~\ref{lem.LKeDfcs}. Then a cut on $P$ and $Q$ gives an $\mathrm{LK}_\mathrm{e}^\mathrm{fcs}(D)$-proof of $\psi$.
\end{proof}

\begin{corollary}
For any formula $\psi$, $\mathrm{LK}_\mathrm{e}^\mathrm{fcs}(D) \vdash \psi$ iff $\mathrm{LK}_\mathrm{e}(D) \vdash \psi$.
\end{corollary}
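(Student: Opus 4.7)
The plan is to observe that this corollary follows immediately by chaining the two preceding theorems through their common right-hand side, namely derivability of $\psi[D\backslash\phi]$ in the base calculus $\mathrm{LK}_\mathrm{e}$. Both theorems characterize provability in an extended calculus (respectively $\mathrm{LK}_\mathrm{e}(D)$ with the generic unfolding rules $D_\mathrm{L}/D_\mathrm{R}$, and $\mathrm{LK}_\mathrm{e}^\mathrm{fcs}(D)$ with the functional-conceptual-substratum rules $D_\mathrm{L}^\mathrm{fcs}/D_\mathrm{R}^\mathrm{fcs}$) as equivalent to $\mathrm{LK}_\mathrm{e} \vdash \psi[D\backslash\phi]$. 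Hence a transitive bi-implication establishes the desired equivalence.

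Concretely, I would argue as follows. Given $\mathrm{LK}_\mathrm{e}^\mathrm{fcs}(D) \vdash \psi$, by the second theorem we obtain $\mathrm{LK}_\mathrm{e}\vdash \psi[D\backslash\phi]$, and then by the first theorem we conclude $\mathrm{LK}_\mathrm{e}(D) \vdash \psi$. The converse direction is symmetric: from $\mathrm{LK}_\mathrm{e}(D) \vdash \psi$ the first theorem yields $\mathrm{LK}_\mathrm{e}\vdash \psi[D\backslash\phi]$, and the second theorem then gives $\mathrm{LK}_\mathrm{e}^\mathrm{fcs}(D) \vdash \psi$. No further syntactic transformation of proofs is needed, since all the translation work has already been packaged into the two theorems (and into Lemmas \ref{lem.LKeD} and \ref{lem.LKeDfcs} behind them).

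There is essentially no obstacle here; the statement is a formal consequence of the two theorems. The only mild subtlety to flag, if desired, is that although both extended calculi share the same derivable formulas, the \emph{proofs} they produce differ in structure: the $\mathrm{LK}_\mathrm{e}^\mathrm{fcs}(D)$-rules package the existential-and-conjunction unfolding of $\phi$ into a single inference reflecting the functional-substratum usage pattern, whereas $\mathrm{LK}_\mathrm{e}(D)$ expands $D$ to the full formula $\phi$ in one step. The corollary records that this difference is invisible at the level of provability.
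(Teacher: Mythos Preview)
Your proposal is correct and matches the paper's intent: the corollary is stated without proof precisely because it is an immediate consequence of the two preceding theorems, each of which characterizes provability in its respective extended calculus as equivalent to $\mathrm{LK}_\mathrm{e} \vdash \psi[D\backslash\phi]$. Your explicit chaining through this common condition is exactly the argument the paper leaves to the reader.
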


Thus one does not loose power by using these specialized definition rules for defined predicate symbols with functional conceptual substratum. On the other hand, one gains a mathematically more natural
use of these defined symbols.

\section{Conclusions}

The general meta-notion of conceptual substratum (and its particular form as functional conceptual substratum) serves as a new kind of (meta-mathema- tical) cognitive mechanism of seminal importance used (implicitly) in mathematical creation/invention.

Moreover, the initial first-order formalization of this meta-concept turns out to be equivalent to central notions in theoretical computer sciences and elementary number theory. In addition, (functional) conceptual substratum suggests an additional way of developing proof-theoretical frameworks with a stronger human-style structure. So, subsequent formalizations of conceptual substratum in higher-order frameworks could bring new light in our quest for understanding how mathematical creation/invention works and for developing software being able to solve mathematical problems at higher levels of abstraction.

\section*{Acknowledgements}

This work was supported by the Vienna Science and Technology Fund (WWTF), Vienna Research Group 12-004.
In addition, the first author wants to thank B. Kresina for all the inspiration, and to Eunise, Carlos and Jeronimo Lopera for their special support and kindness.

\bibliographystyle{plainnat}

\end{document}